\begin{document}
\title[ ]
{Asymmetric Fuglede Putnam's Theorem for operators reduced by their eigenspaces}

\author[ F. LOMBARKIA and M. AMOUCH]
{  F. Lombarkia and M. Amouch }  

\address{Farida Lombarkia \newline
 Department of Mathematics, Faculty of Science, University of Batna,
05000, Batna, Algeria.}
\email{lombarkiafarida@yahoo.fr}

\address{Mohamed AMOUCH \newline
Department of Mathematics and informatic
University Chouaib Doukkali,
Faculty of Sciences, Eljadida.
24000, Eljadida, Morocco.}
\email{mohamed.amouch@gmail.com}



\subjclass[2000]{47B47, 47B20, 47B10}
\keywords{Hilbert space; elementary operator; operators reduced by their eigenspaces, polaroid operators, property $(\beta)$}

\begin{abstract}
Fuglede-Putnam Theorem have been proved for a considerably large number of class of operators. In this paper by using the spectral theory, we obtain a theoretical and general framework from which Fuglede-Putnam theorem may be promptly established for many classes of operators.
\end{abstract}

\maketitle
\numberwithin{equation}{section}
\newtheorem{theorem}{Theorem}[section]
\newtheorem{lemma}[theorem]{Lemma}
\newtheorem{proposition}[theorem]{Proposition}
\newtheorem{corollary}[theorem]{Corollary}
\newtheorem*{remark}{Remark}
\newtheorem{defn}[theorem]{Definition}
\newtheorem{exa}[theorem]{Example}

\section{Introduction and basic definitions}
Fuglede-Putnam Theorem have been studied in the last two decades by several authors and most of them have essentially proved such theorem for special classes of operators. Many times the arguments used, to prove Fuglede-Putnam Theorem are similar, but in this paper we show that it is possible to bring back up this theorem from some general common properties. We use the spectral theory to obtain a theoretical and general framework from which Fuglede-Putnam theorem is established, and we can deduce that Fuglede-Putnam Theorem hold for many classes of operators. Let $H$ be an infinite complex Hilbert space and consider two bounded linear operators
$A,B\in L(H)$. Let $L_{A}\in L(L(H))$ and $R_{B}\in L(L(H))$ be the left and the right
multiplication operators, respectively, and denote by $d_{A,B}\in L(L(H))$ either the
elementary operator $\Delta_{A,B}(X)=AXB-X$ or the generalized derivation
$\delta_{A,B}(X)=AX-XB$. Given $T\in L(H),$ $\ker(T)$, $\mathcal{R}(T)$, $\sigma(T)$ and $\sigma_{p}(T)$ will stand for the null space, the range of $T$, the spectrum of $T$ and the point spectrum of $T$. Recall that if $M,$ $N$ are linear subspaces of a normed linear space $V,$ then
$M$ is orthogonal to $N$ in the sense of Birkhoff, $M\bot N$ for short, if $\|m\|\leq\|m+n\|$ for all $m\in M$ and $n\in N.$

It is known that if $A,\,B^{*}\in L(H)$ are hyponormal operators, then $d_{A,B}$ satisfies the asymmetric Putnam Fuglede commutativity property $\ker d_{A,B}\subseteq\ker d_{A^{*},B^{*}},$ hence $\ker d_{A,B}\bot\mathcal{R}(d_{A,B}).$
From the fact that hyponormal operators are closed under translation and multiplication by scalars, B. P. Duggal in \cite{Du8} deduced that
if $A$ and $B^{*}$ are hyponormal, then
$\ker(d_{A,B}-\lambda I)\subseteq\ker(d_{A^{*},B^{*}}-\overline{\lambda} I)$ holds for every complex number
$\lambda,$ where $\overline{\lambda}$ is the conjugate of $\lambda.$
An operator $T\in L(H)$ is said to be p-hyponormal, $0<p\leq 1,$ if $|T^{*}|^{2p}\leq|T|^{2p},$
where $|T|= (T^*T)^{1\over 2}$. An invertible operator $T\in L(H)$ is log-hyponormal if $\log|T^{*}|^{2}\leq\log|T|^{2}.$
In \cite{Du8} B. P. Duggal proved that if $A$ and $B^{*}$ are p-hyponormal or log-hyponormal, then $\ker(d_{A,B}-\lambda I)\subseteq\ker(d_{A^{*},B^{*}}-\overline{\lambda} I)$ and $\ker (d_{A,B}-\lambda I)\bot\mathcal{R}(d_{A,B}-\lambda I),$ for every complex number $\lambda.$
An operator $T\in L(H)$ is said to be w-hyponormal if $( {| T^{*}|}
^{\frac{1}{2}}{| T|} {| T^{*}|} ^{\frac{1}{2}}) ^{\frac{1}{2}}\geq{|T^{* }|},$ see \cite{Itya}.
It is shown in \cite{AW,AW2} that the class of w-hyponormal properly contains the class of p-hyponormal ($0<p\leq 1,$) and log-hyponormal.
T. Furuta, M. Ito and T. Yamazaki \cite{FIY} introduced a very interesting class $\mathcal{A}$ operators defined by $|T^{^2}|-|T|^{2}\geq0,$ and they showed that class $\mathcal{A}$ is a subclass of paranormal operators (i.e., $\|Tx\|^{2}\leq\|T^{2}x\|\|x\|,$ for all $x\in H$ ) and contains w-hyponormal operators. An operator $T\in L(H)$ is said to be class $\mathcal{A}(s,t)$, $0<s,t$ if $|T^{*}|^{2t}\leq(|T^{*}|^{t}|T|^{2s}|T^{*}|^{t})^{\frac{t}{t+s}}.$ Then $T\in\mathcal{A}(\frac{1}{2},\frac{1}{2})$ if an only if $T$ is w-hyponormal  and $T\in\mathcal{A}(1,1)$ if an only if $T$ is class $\mathcal{A}$. \\I. H. Jeon and I. H. Kim \cite{JK} introduced quasi-class $\mathcal{A}$ operators defined by $T^{*}(|T^{^2}|-|T|^{2})T\geq0,$ as an extension of the notion of class $\mathcal{A}$ operators. K. Tanahash, I. H. Jeon, I, H. Kim and A. Uchiyama \cite{TJKU} introduced k-quasi-class $\mathcal{A}$ operators defined by $T^{*k}(|T^{^2}|-|T|^{2})T^{k}\geq0,$ for a positive integer $k$ as an extension of the notion of quasi-class $\mathcal{A}$ operators, for interesting properties of k-quasi-class $\mathcal{A}$ operators, called also quasi-class ($\mathcal{A}$, k), see \cite{GF, TJKU}.\\
In \cite[Lemma 2.4]{CH}, \cite[Theorem 3.6]{BL} and \cite[Lemma 2.4]{DKK} the authors proved that if $A,\,B^{*}\in L(H)$ are w-hyponormal operators with
$\ker A\subseteq\ker A^{*}$ and $\ker B^{*}\subseteq\ker B$, then $d_{A,B}$ satisfies the asymmetric Putnam Fuglede commutativity property $\ker d_{A,B}\subseteq\ker d_{A^{*},B^{*}}.$
Recently B.P. Duggal, C. S. Kubruslly and I. H. Kim in \cite[Theorem 2.5]{DKK} have proved that if $A\in\mathcal{A}(s_1,t_1)$  and $B^{*}\in\mathcal{A}(s_2,t_2)$, $0<s_1,s_2,t_1,t_2\leq 1$ are such that
$\ker A\subseteq\ker A^{*}$ and $\ker B^{*}\subseteq\ker B$, then $\delta_{A,B}$ satisfies the asymmetric Putnam Fuglede commutativity property $\ker \delta_{A,B}\subseteq\ker \delta_{A^{*},B^{*}}.$ \\In this paper we prove that all the precedent results are a consequence of our main results. Since the class of  k-quasi-class $\mathcal{A}$ operators contains properly the class $\mathcal{A}(s,t)$, $0<s,t$
operators, it therefore provides a unified approach in studying the asymmetric Putnam Fuglede commutativity property for k-quasi-class $\mathcal{A}$ operators.\\ Now we recall some definitions

\begin{defn}
An operator $T\in L(H)$ has Bishop's property $(\beta)$ if for every open set $U\subset\mathbb{C}$ and every sequence of analytic functions $f_{n}:U\rightarrow X,$ with the property that $(T-\lambda I)f_{n}(\lambda)\rightarrow0$ uniformly on every compact subset of $U,$ it follows that $f_{n}\rightarrow0,$ again locally uniformly on $U.$
\end{defn}
Bishop's property $(\beta)$ implies Dunford property $(C)$, also $T$ satisfies property $(\beta)$ if and only if $T^{*}$ satisfies property $(\delta)$ \cite[Theorem 2.5.5]{LN}. For more information on  property $(\beta)$, property $(\delta)$ and Dunford's condition $(C)$ we refer the interested reader to \cite{LN}.

Recall that the ascent $p(T)$ of an operator $T$, is defined by
$p(T)=\inf\{n\in\mathbb{N}:\ker T^{n}=\ker T^{n+1}\}$ and the descent
$q(T)=\inf\{n\in\mathbb{N}:R(T^{n})=R(T^{n+1})\}$, with
$\inf\emptyset=\infty.$ It is well known that if $p(T)$ and $q(T)$ are both finite
then $p(T)=q(T).$  We denote by
$\Pi(T)=\{\lambda\in\mathbb{C}: p(T-\lambda I)=q(T-\lambda I)<\infty\}$ the set of poles of the resolvent. An operator $T\in L(H)$ is called Drazin invertible if and only if it has finite ascent and descent. The Drazin spectrum of an operator $T$ is defined by
\begin{equation*}
\sigma_{D}(T)=\{\lambda\in\mathbb{C}: T-\lambda I\,\,\,\text{%
is not Drazin invertible}\}.
\end{equation*}
In the sequel we shall denote by $accD$ and $isoD,$ the set of accumulation points and the set of isolated points of $D\subset\mathbb{C},$ respectively
\begin{defn}
An operator $T\in L(H)$ is said to be polaroid if
\begin{equation*}
iso\sigma(T)\subseteq\Pi(T).
\end{equation*}
\end{defn}
It is easily seen that, if $T\in L(H)$ is polaroid, then $\Pi(T)=E(T)$, where $E(T)$ is the set of eigenvalues of $T$ which are isolated in the spectrum of $T$\\
An important subspace in local spectral theory is the the quasinilpotent part of $T$ is defined by
\begin{equation*}
H_{0}(T)=\{x\in H: \lim_{n\rightarrow\infty}\|T^{n}(X)\|^{\frac{1}{n}}=0\}.
\end{equation*}
It is easily seen that $\ker T^{n}\subset H_{0}(T)$ for every $n\in\mathbb{N}$, see \cite{A} for information on $H_{0}(T)$.\\

The range-kernel orthogonality of $d_{A,B}$ in the sense of G. Birkhoff was studied by numerous mathematicians, see \cite{Am,BL, D, K, Tu} and the references therein. A sufficient condition guaranteeing the range-kernel orthogonality of $d_{A,B}$ is that $\ker d_{A,B}\subseteq\ker d_{A^{*},B^{*}}$ \cite{D}.
The main objective of this paper is to give sufficient conditions to have $\ker(d_{A,B}-\lambda I)\subseteq \ker(d_{A^{*},B^{*}}-\overline{\lambda} I),$ for every complex number $\lambda$. After section one where several basic definitions and facts will be recalled, in section two, we prove that if $A$ and $B^{*}$ are reduced by each of its eigenspaces, polaroid and have property $(\beta),$ then $\ker(d_{A,B}-\lambda I)\subseteq \ker(d_{A^{*},B^{*}}-\overline{\lambda} I)$ for every complex number $\lambda$ and that the elementary operator $d_{A,B}$ satisfies the range-kernel orthogonality $\|X\|\leq\|X-(d_{A,B}-\lambda I)Y\|$, for all $X\in\ker(d_{A,B}-\lambda I)$ and $Y\in L(H).$
We apply the results obtained to k-quasi-class $\mathcal{A }$ operators. We prove that $d_{A,B}$ satisfies the asymmetric Putnam Fuglede commutativity property $\ker d_{A,B}\subseteq\ker d_{A^{*},B^{*}},$ if $A$ and $B^{*}$ are k-quasi-class $\mathcal{A}$ operator with $\ker A\subseteq\ker A^{*}$ and $\ker B^{*}\subseteq\ker B.$ This generalizes results given in \cite[Theorem 2.3]{Du8}, \cite[Theorem 3.6]{BL}, \cite[Lemma 2.4]{DKK} and \cite[Theorem 2.5]{DKK}.

\section{Main results}

Let $T \in L(H)$ be reduced by each of its eigenspaces. If we let $M=\bigvee\{\ker (T-\mu I), \mu\in\sigma_{p}(T)\}$ (where $\bigvee(.)$ denotes the closed linear span, it follows that $M$ reduces $T$. Let $T_{1}=T|_{M}$ and $T_{2}=T|_{M^{\perp}}$. By \cite[Proposition 4.1]{Ber} we have
\begin{itemize}
  \item $T_{1}$ is normal with pure point spectrum,
  \item $\sigma_{p}(T_{1})=\sigma_{p}(T)$,
  \item $\sigma(T_{1})=cl\sigma_{p}(T_{1})$(here $cl$ denotes closure),
  \item $\sigma_{p}(T_{2})=\emptyset$.
\end{itemize}

The classical and most known form of the Fuglede Putnam theorem is the following
\begin{theorem}\cite{F, P}
If $X, A$ and $B$ are bounded operators acting on complex Hilbert space $H$ such that $A$ and $B$ are normal, then
\begin{equation*}
AX=XB\Longrightarrow A^{*}X=XB^{*}.
\end{equation*}
\end{theorem}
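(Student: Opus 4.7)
The plan is to reduce the Putnam formulation ($A \neq B$) to Fuglede's theorem ($A = B$) by the standard $2\times 2$ matrix trick, and then to prove Fuglede by an entire-function / Liouville argument. For the reduction, I would introduce
\[
T = \begin{pmatrix} A & 0 \\ 0 & B \end{pmatrix}, \qquad Y = \begin{pmatrix} 0 & X \\ 0 & 0 \end{pmatrix}
\]
acting on $H \oplus H$. Since $A$ and $B$ are normal, $T$ is normal, and a direct computation shows $TY = YT$ if and only if $AX = XB$. If Fuglede's theorem is granted, applying it to $T$ and $Y$ gives $T^*Y = YT^*$, which upon reading off the $(1,2)$-entry yields $A^*X = XB^*$.

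For Fuglede's theorem, assume $A$ is normal and $AX = XA$. Induction on $n$ gives $A^n X = X A^n$ for all $n \geq 0$, hence $e^{zA} X = X e^{zA}$ for every $z \in \mathbb{C}$, equivalently $X = e^{-\overline{z} A} X e^{\overline{z} A}$. I would then introduce the operator-valued entire function
\[
F(z) = e^{zA^*} X e^{-zA^*}
\]
and rewrite it, using the commutation just obtained, as
\[
F(z) = e^{zA^*} e^{-\overline{z} A} X e^{\overline{z} A} e^{-zA^*}.
\]
Because $A$ is normal, $A$ and $A^*$ commute, so the exponentials on each side collapse to $U(z) := e^{zA^* - \overline{z} A}$ and its inverse. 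The exponent $zA^* - \overline{z}A$ is skew-adjoint, so $U(z)$ is unitary, and therefore $\|F(z)\| = \|U(z) X U(z)^{-1}\| = \|X\|$ for every $z$.

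Thus $F \colon \mathbb{C} \to L(H)$ is a bounded entire function. By Liouville's theorem (applied coordinatewise via the weak topology, or to the entire scalar functions $z \mapsto \langle F(z)x, y\rangle$), $F$ is constant, so $F'(0) = A^* X - X A^* = 0$, which is Fuglede. Combined with the matrix reduction this gives Putnam. The main conceptual step is the trick of inserting the commuting exponentials of $A$ to rebalance $F(z)$ into a unitary conjugation; everything else is routine once normality is used to guarantee that $A$ and $A^*$ commute, which is what makes the unitarity of $U(z)$ work.
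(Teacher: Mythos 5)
Your proposal is correct. The paper does not actually prove this statement: it is quoted as the classical Fuglede--Putnam theorem with citations to Fuglede and Putnam, so there is no in-paper argument to compare against. What you give is the standard modern proof --- Berberian's $2\times 2$ matrix reduction of the Putnam form to the Fuglede form, followed by Rosenblum's entire-function argument. All the steps check out: $T$ is normal exactly because $A$ and $B$ are, the $(1,2)$-entry bookkeeping is right, and in the Liouville step you correctly define $F(z)=e^{zA^*}Xe^{-zA^*}$ (which is holomorphic in $z$) and only use the anti-holomorphic exponentials $e^{\pm\overline{z}A}$ to establish the uniform bound $\|F(z)\|=\|X\|$ via unitarity of $e^{zA^*-\overline{z}A}$ --- that separation is the one place where such proofs usually go wrong, and you handled it correctly. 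Differentiating the constant function $F$ at $0$ gives $A^*X=XA^*$, and the matrix trick transports this to $A^*X=XB^*$.
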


\begin{theorem}\label{theorem1}
Suppose that $A,B^{*}\in L(H)$ are reduced by each of its eigenspaces, polaroid and have property $(\beta)$, then
\begin{equation*}
\ker(\delta_{A,B}-\lambda I)\subseteq\ker(\delta_{A^{*},B^{*}}-\overline{\lambda} I), \,\,\ \forall\lambda\in\mathbb{C}.
\end{equation*}
\end{theorem}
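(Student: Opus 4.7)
The plan is first to reduce to $\lambda=0$, then apply the Berberian-type decomposition recalled at the start of Section~2. Since polaroid, property $(\beta)$, and being reduced by each eigenspace are all preserved under translation by a scalar, after replacing $B$ by $B+\lambda I$ it suffices to show $AX=XB \Longrightarrow A^{*}X=XB^{*}$. I will then write $A=A_{1}\oplus A_{2}$ on $H=M\oplus M^{\perp}$ with $A_{1}$ normal of pure point spectrum $\sigma_{p}(A)$ and $\sigma_{p}(A_{2})=\emptyset$, and analogously $B^{*}=C_{1}\oplus C_{2}$ on $H=N\oplus N^{\perp}$ with $C_{1}$ normal and $\sigma_{p}(C_{2})=\emptyset$, so that $B=C_{1}^{*}\oplus C_{2}^{*}$.

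\textbf{Off-diagonal vanishing.} Writing $X=(X_{ij})_{i,j=1,2}$ in the corresponding operator-matrix form, $AX=XB$ decouples into $A_{1}X_{11}=X_{11}C_{1}^{*}$, $A_{1}X_{12}=X_{12}C_{2}^{*}$, $A_{2}X_{21}=X_{21}C_{1}^{*}$, and $A_{2}X_{22}=X_{22}C_{2}^{*}$. For $X_{12}$, let $P_{\mu}$ be the orthogonal projection onto $\ker(A_{1}-\mu I)$, $\mu\in\sigma_{p}(A)$. Premultiplying the second equation by $P_{\mu}$ yields $P_{\mu}X_{12}(C_{2}^{*}-\mu I)=0$; taking adjoints and using $\sigma_{p}(C_{2})=\emptyset$ (so that $C_{2}-\overline{\mu}I$ is injective) forces $P_{\mu}X_{12}=0$, and summing over $\mu$ with $\sum_{\mu}P_{\mu}=I_{M}$ gives $X_{12}=0$. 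A symmetric argument using the eigenprojections of $C_{1}^{*}$ together with $\sigma_{p}(A_{2})=\emptyset$ gives $X_{21}=0$.

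\textbf{Diagonal blocks.} The $X_{11}$ block is between two normal operators, so the classical Fuglede--Putnam theorem immediately gives $A_{1}^{*}X_{11}=X_{11}C_{1}$. For $X_{22}$ I would attack $A_{2}^{*}X_{22}=X_{22}C_{2}$ by exploiting that property $(\beta)$ restricts to invariant subspaces---so $A_{2}$ retains $(\beta)$ and $C_{2}^{*}$ inherits $(\delta)$---combined with the polaroid structure of the ambient $A$ and $B^{*}$ which, since $\sigma_{p}(A_{2})=\sigma_{p}(C_{2})=\emptyset$, controls where $\sigma(A_{2})$ and $\sigma(C_{2})$ can sit relative to the spectra of the normal parts. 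A local-spectral argument, using the containment of local spectra pushed forward by the intertwining $X_{22}$, should then deliver the adjoint intertwining; reassembling the four blocks gives $A^{*}X=X_{11}C_{1}\oplus X_{22}C_{2}=XB^{*}$.

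\textbf{Main obstacle.} The hard part will be the $X_{22}$ block. The eigenprojection trick that handled $X_{12}$ and $X_{21}$ collapses when neither $A_{2}$ nor $C_{2}$ has eigenvalues, and one cannot force $X_{22}=0$ by a Rosenblum-type spectral-separation argument since $\sigma(A_{2})$ and $\sigma(C_{2})$ may genuinely overlap (for instance when a Volterra-type quasinilpotent summand is present). Moreover $A_{2}$ itself need not be polaroid even when $A$ is, so the polaroid hypothesis must be used on the full operators and transferred into a Fuglede--Putnam identity for the residual pair $(A_{2},C_{2})$ through properties $(\beta)$ and $(\delta)$; this transfer is the subtle point I expect to absorb the bulk of the proof.
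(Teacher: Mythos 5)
Your reduction to $\lambda=0$ (replacing $B$ by $B+\lambda I$) and your treatment of the off-diagonal blocks are correct, and are in fact cleaner than what the paper does: the paper never performs this translation and instead runs a case analysis on whether $\lambda$ lies outside $\sigma(\delta_{A,B})$, in $iso\,\sigma(\delta_{A,B})$, or in $acc\,\sigma(\delta_{A,B})$, killing off-diagonal entries by invertibility of the various $\delta_{A_i,B_j}-\lambda I$ or by sequences of approximate eigenvectors, whereas your eigenprojection argument ($P_{\mu}X_{12}(C_{2}^{*}-\mu I)=0$, take adjoints, use injectivity of $C_{2}-\overline{\mu}I$, sum over $\mu$) reaches $X_{12}=X_{21}=0$ in one stroke. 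The $X_{11}$ block via the classical Fuglede--Putnam theorem matches the paper exactly.

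The genuine gap is precisely where you locate it: the $X_{22}$ block, for which you offer only the hope that a local-spectral argument ``should deliver the adjoint intertwining.'' The device the paper uses there, and which your sketch lacks, is the quasinilpotent part: since property $(\beta)$ passes to the reducing summand $A_{2}$, the subspace $H_{0}(A_{2}-\mu)$ is closed with $\sigma(A_{2}|_{H_{0}(A_{2}-\mu)})\subseteq\{\mu\}$, and because $\sigma_{p}(A_{2})=\emptyset$ the paper concludes $H_{0}(A_{2}-\mu)=\{0\}$; the relation $(A_{2}-\mu)X_{22}=X_{22}(B_{2}-\nu)$ then maps $H_{0}(B_{2}-\nu)$ into $H_{0}(A_{2}-\mu)$, so $X_{22}$ annihilates $H_{0}(B_{2}-\nu)$, and the paper finishes by asserting that $H_{0}(B_{2}-\nu)$ is dense in $N_{2}$, forcing $X_{22}=0$. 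You should be aware, however, that your instinct that this block is the crux is well founded and that the target $X_{22}=0$ cannot be right in general: take $A=B=U$ the bilateral shift. Then $U$ is unitary, hence polaroid with property $(\beta)$ and (vacuously) reduced by each eigenspace, $M=N=\{0\}$, so $X=X_{22}$, and $X=I$ lies in $\ker\delta_{U,U}$; the theorem's conclusion holds by Fuglede, but not because $X_{22}$ vanishes. The paper's density claim $N_{2}=\overline{H_{0}(B_{2}-\nu)}$ is deduced only from invariance of $H_{0}(B_{2}-\nu)$ under $B_{2}-\nu$ and fails here, since $H_{0}(U-\nu)=\ker(U-\nu)=\{0\}$. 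So neither your sketch nor the paper's own argument actually closes the $X_{22}$ case; a correct proof must handle a nonzero intertwiner between two eigenvalue-free restrictions, which is exactly the transfer problem you flag as the main obstacle.
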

\begin{proof}
Since $A$ and $B^{*}$ are reduced by each of its eigenspaces, then
 then there exists
 $$M_{1}=\bigvee\{\ker (A-\beta I), \beta\in\sigma_{p}(A)\} \mbox{ and } M_{2}=H\ominus M_{1}$$ on the one hand and $$N_{1}=\bigvee\{\ker (B^{*}-\overline{\alpha}I), \overline{\alpha}\in\sigma_{p}(B^{*})\} \mbox{ and } N_{2}=H\ominus N_{1}$$ on the other hand such that
$A$ and $B$ have the representations $$A=\left(
                                  \begin{array}{cc}
                                     A_1 & 0 \\
                                    0 & A_{2}\\
                                  \end{array}
                                \right)
\mbox{ on } H=M_{1}\oplus M_{2}$$  and $$ B=\left(
                                  \begin{array}{cc}
                                     B_1 & 0 \\
                                    0 & B_{2}\\
                                  \end{array}
                                \right)
\mbox{ on }H=N_{1}\oplus N_{2}$$ Recall from \cite{ER} that $\sigma(\delta_{A,B})=\sigma(A)-\sigma(B)$, we consider the following cases:

\textbf{Case 1:} If $\lambda\in\mathbb{C}\backslash\sigma(\delta_{A,B}),$ then $\ker(\delta_{A,B}-\lambda I)=\{0\}$ and hence $$\ker(\delta_{A,B}-\lambda I) \subseteq\ker(\delta_{A^{*},B^{*}}-\overline{\lambda} I).$$

\textbf{Case 2:} If $\lambda\in iso\sigma(\delta_{A,B}),$ then there exists finite sequences
 $\{\mu_{i}\}_{i=1}^{n}$ and $\{\nu_{i}\}_{i=1}^{n}$, where
 $\mu_{i}\in iso\sigma(A)$ and $\nu_{i}\in iso\sigma(B)$ such that $$\lambda=\mu_{i}-\nu_{i}, \mbox{ for all }1\leq i\leq n.$$
 $\lambda\not\in\sigma(\delta_{A_{i},B_{j}})$ for all $1\leq i,j\leq 2$ other than $i=j=1$. Consider $X\in\ker(\delta_{A,B}-\lambda I)$ such that $X:N_{1}\oplus N_{2}\longrightarrow M_{1}\oplus M_{2} \mbox{ have the representation } X=[X_{kl}]_{k,l=1}^{2}.$
Hence $$(\delta_{A,B}-\lambda I)(X)=\left(
                                  \begin{array}{cc}
                                    (\delta_{A_{1},B_{1}}-\lambda I)(X_{11}) & (\delta_{A_{1},B_{2}}-\lambda I)(X_{12}) \\
                                    (\delta_{A_{2},B_{1}}-\lambda I)(X_{21}) & (\delta_{A_{2},B_{2}}-\lambda I)(X_{22})\\
                                  \end{array}
                                \right)=0
.$$ Observe that $\delta_{A_{i},B_{j}}-\lambda I$ is invertible for all $1\leq i,j\leq2$ other than $i=j=1$. Hence $X_{22}=X_{21}=X_{12}=0.$ Since $A_{1}-\mu_{i}$ and $B_{1}-\nu_{i}$ are normal, it follows from Fuglede-Putnam theorem that $$(A_{1}^{*}-\overline{\mu_{i}})X_{11}-X_{11}(B_{1}^{*}-\overline{\nu_{i}})=0,$$ consequently $$X=X_{11}\oplus 0\in\ker(\delta_{A^{*},B^{*}}-\overline{\lambda} I).$$

\textbf{Case 3:} If $\lambda\in acc\sigma(\delta_{A,B}),$ then there exists
 $\mu\in\sigma(A)$ and $\nu\in\sigma(B)$ such that $\lambda=\mu-\nu\in(\sigma(A)-acc\sigma(B))$ or $\lambda=\mu-\nu\in(acc\sigma(A)-\sigma(B))$. Since $A$ and $B$ are polaroid, then $\mu\in acc\sigma(A)=\sigma_{D}(A)$ and $\nu\in acc\sigma(B)=\sigma_{D}(B)$, it follows from \cite[Theorem 2.4]{BZ} that
 $$\mu\in\sigma_{D}(A_{1})\cup\sigma_{D}(A_{2})\mbox{ and }\nu\in\sigma_{D}(B_{1})\cup\sigma_{D}(B_{2}),$$ since $\sigma_{p}(A_{2})=\sigma_{p}(B^{*}_{2})=\emptyset,$ then $\sigma_{D}(A_{2})=\sigma(A_{2})$ and $\sigma_{D}(B_{2})=\sigma(B_{2})$. Hence
 $$\mu\in\sigma_{D}(A_{1})\cup\sigma(A_{2})\mbox{ and }\nu=\sigma_{D}(B_{1})\cup\sigma(B_{2}).$$ Let $X\in\ker(\delta_{A,B}-\lambda I)$ such that $$X:N_{1}\oplus N_{2}\longrightarrow M_{1}\oplus M_{2} \mbox{ have the representation } X=[X_{kl}]_{k,l=1}^{2}.$$
Hence $$(\delta_{A,B}-\lambda I)(X)=\left(
                                  \begin{array}{cc}
                                    (\delta_{A_{1},B_{1}}-\lambda I)(X_{11}) & (\delta_{A_{1},B_{2}}-\lambda I)(X_{12}) \\
                                    (\delta_{A_{2},B_{1}}-\lambda I)(X_{21}) & (\delta_{A_{2},B_{2}}-\lambda I)(X_{22})\\
                                  \end{array}
                                \right)=0
.$$
We consider the following cases
\begin{itemize}
  \item $\mu\in\sigma(A_1)$ and $\nu\in\sigma_{D}(B_1)$, or
  \item $\mu\in\sigma(A_1)$ and $\nu\in\sigma(B_2)$, or
  \item $\mu\in\sigma(A_2)$ and $\nu\in\sigma_{D}(B_1)$, or
  \item $\mu\in\sigma(A_2)$ and $\nu\in\sigma(B_2)$.
\end{itemize}
or
\begin{itemize}
  \item $\mu\in\sigma_{D}(A_1)$ and $\nu\in\sigma(B_1)$, or
  \item $\mu\in\sigma_{D}(A_1)$ and $\nu\in\sigma(B_2)$, or
  \item $\mu\in\sigma(A_2)$ and $\nu\in\sigma(B_1)$.
\end{itemize}
We start by studying these cases
\begin{itemize}
  \item If $\mu\in\sigma(A_1)$ and $\nu\in\sigma_{D}(B_1)$.
  Observe that $\delta_{A_{i},B_{j}}-\lambda I$ is invertible for all $1\leq i,j\leq2$ other than $i=j=1$, we have $X\in\ker(\delta_{A,B}-\lambda I)$. Hence $X_{12}=X_{21}=X_{22}=0.$ Since $A_{1}-\mu$ and $B_{1}-\nu$ are normal, it follows from Fuglede-Putnam theorem that $$(A_{1}^{*}-\overline{\mu_{i}})X_{11}-X_{11}(B_{1}^{*}-\overline{\nu_{i}})=0,$$ consequently $$X=X_{11}\oplus 0\in\ker(\delta_{A^{*},B^{*}}-\overline{\lambda} I).$$

  \item If $\mu\in\sigma(A_1)$ and $\nu\in\sigma(B_2)$, we have $X\in\ker(\delta_{A,B}-\lambda I)$, then $X_{22}=X_{21}=X_{11}=0$.
  Since $\mu\in\sigma(A_1)=cl\sigma_{p}(T_{1})$, then there exist a sequence $(\mu_{n})\in\sigma_{p}(A_{1})$ such that $\mu_{n}\longrightarrow\mu$, so for all $n\in\mathbb{N}$ there exist a non null vector $x_{n}\in\ker(A_1-\mu_{n})=\ker(A^{*}_1-\overline{\mu_{n}})$. We have
  $(A_1-\mu)X_{12}=X_{12}(B_2-\nu)$ this implies that $(B_{2}^{*}-\overline{\nu})X_{12}^{*}=X_{12}^{*}(A_{1}^{*}-\overline{\mu})$, consequently $(B_{2}^{*}-\overline{\nu})X_{12}^{*} x_{n}=X_{12}^{*}(A_{1}^{*}-\overline{\mu})x_{n}.$ Since $\mu_{n}$ is an eigenvalue, so there exists a no null vector $z\in\ker(A^{*}_1-\overline{\mu_{n}})$, such that $\lim\|x_{n}-z\|=0.$ Since $(B_{2}^{*}-\overline{\nu})$ is injective, we obtain $X_{12}=0,$ hence $$X=0\in\ker(\delta_{A^{*},B^{*}}-\overline{\lambda} I).$$
  \item If $\mu\in\sigma(A_2)$ and $\nu\in\sigma_{D}(B_1)$, we have $X\in\ker(\delta_{A,B}-\lambda I)$, then $X_{11}=X_{22}=X_{12}=0$ and
  $(A_2-\mu)X_{21}=X_{21}(B_1-\nu)$. Since $\nu\in\sigma_{D}(B_1)$, then $\overline{\nu}\in\sigma_{D}(B^{*}_1)=\sigma(B^{*}_1)\backslash E(B^{*}_{1})$, so there exist a sequence $(\nu_{n})\in\sigma_{p}(B^{*}_{1})$ such that $\nu_{n}\longrightarrow\overline{\nu}$, so for all $n\in\mathbb{N}$ there exist a non null vector $x_n\in\ker(B^{*}_1-\nu_{n})=\ker(B_1-\overline{\nu_{n}})$. We have
  $(A_2-\mu)X_{21}=X_{21}(B_1-\nu)$, consequently $(A_2-\mu)X_{21}x_n=X_{21}(B_1-\nu)x_n.$ Since $\nu_{n}$ is an eigenvalue, so there exists a no null vector $z\in\ker(B_1-\overline{\nu_{n}})$, such that $\lim\|x_{n}-z\|=0.$ Since $(A_2-\mu)$ is injective, we obtain $X_{21}=0,$ hence $$X=0\in\ker(\delta_{A^{*},B^{*}}-\overline{\lambda} I).$$
  \item If $\mu\in\sigma(A_2)$ and $\nu\in\sigma(B_2)$. Since $A$ has property $(\beta)$ it follows from
  \cite[Remarks 3.2]{BZZ} that $A_{2}$ has property $(\beta)$, applying \cite[Theorem 2.20]{A} we get $H_{0}(A_2-\mu)$ is closed and from \cite[Proposition 1.2.20]{LN} That $\sigma(A_2|_{H_{0}(A_2-\mu)})\subseteq\{\mu\}$. If $\sigma(A_2|_{H_{0}(A_2-\mu)})=\emptyset$, then $H_{0}(A_2-\mu)=\{0\}$, the case $\sigma(A_2|_{H_{0}(A_2-\mu)})=\{\mu\}$ is not possible, since the operator $A_2$ does not contains isolated points. Hence $H_{0}(A_2-\mu)=\{0\},$ we have $X\in\ker(\delta_{A,B}-\lambda I)$, then $X_{21}=X_{12}=X_{11}=0$ and $(A_2-\mu)X_{22}=X_{22}(B_2-\nu)$, this implies that, if $t\in H_{0}(B_2-\nu)$, then $X_{22}t\in H_{0}(A_2-\mu)=\{0\}.$ Hence $X_{22}t=0.$ Since $t\in H_{0}(B_2-\nu)$, using properties of quasinilpotent part, we get $(B_2-\nu)(t)\in H_{0}(B_2-\nu),$ consequently $N_{2}=\overline{H_{0}(B_2-\nu)}.$ So $X_{22}=0$, hence $$X= 0\in\ker(\delta_{A^{*},B^{*}}-\overline{\lambda} I).$$
\end{itemize}
The other cases will be proved similarly.
\end{proof}
\begin{theorem}\label{theorem2}
Let $A,B\in L(H)$. If all the eigenvalues of $A$, $B^{*}$ are reduced by each of its eigenspaces, polaroid and have property $(\beta)$, then
\begin{equation*}
\ker(\Delta_{A,B}-\lambda I)\subseteq\ker(\Delta_{A^{*},B^{*}}-\overline{\lambda} I), \,\,\ \forall\lambda\in\mathbb{C}.
\end{equation*}
\end{theorem}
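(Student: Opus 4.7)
The plan is to imitate the proof of Theorem~\ref{theorem1} almost verbatim. The only substantive change is that the additive spectral identity $\sigma(\delta_{A,B}) = \sigma(A) - \sigma(B)$ used there is replaced by the multiplicative one $\sigma(\Delta_{A,B}) = \sigma(A)\sigma(B) - 1$, so that the parametrization ``$\lambda = \mu - \nu$'' gets replaced throughout by ``$1 + \lambda = \mu\nu$''. First I would invoke the eigenspace-reducing hypothesis to write $A = A_1 \oplus A_2$ on $H = M_1 \oplus M_2$ and $B = B_1 \oplus B_2$ on $H = N_1 \oplus N_2$, with $A_1, B_1$ normal of pure point spectrum and $\sigma_p(A_2) = \sigma_p(B_2^*) = \emptyset$. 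Representing $X \in \ker(\Delta_{A,B} - \lambda I)$ as a $2 \times 2$ block matrix $[X_{kl}]$ with respect to these decompositions, the equation $AXB = (1+\lambda)X$ decouples into the four independent equations $A_i X_{ij} B_j = (1+\lambda) X_{ij}$ for $i,j \in \{1,2\}$.

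Then I would split into the same three cases as in the proof of Theorem~\ref{theorem1}. Case~1, $\lambda \notin \sigma(\Delta_{A,B})$, is trivial. In Case~2, $\lambda \in iso\,\sigma(\Delta_{A,B})$, there are only finitely many factorizations $1 + \lambda = \mu_i \nu_i$ with $\mu_i, \nu_i$ isolated in $\sigma(A)$, $\sigma(B)$ respectively; invertibility of $\Delta_{A_i,B_j} - \lambda I$ for $(i,j) \neq (1,1)$ forces three of the four blocks of $X$ to vanish, and the surviving $X_{11}$ satisfies $A_1 X_{11} B_1 = (1+\lambda) X_{11}$ with $A_1, B_1$ normal, so the elementary-operator form of Fuglede--Putnam for normal operators (a standard consequence of the classical Fuglede--Putnam theorem quoted above) yields $A_1^* X_{11} B_1^* = (1+\bar\lambda) X_{11}$. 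In Case~3, $\lambda \in acc\,\sigma(\Delta_{A,B})$, I would run the same seven sub-cases as in Theorem~\ref{theorem1}, using the polaroid hypothesis to identify $acc\,\sigma(A) = \sigma_D(A)$ and the decomposition of \cite[Theorem~2.4]{BZ} to place $\mu$ in $\sigma_D(A_1) \cup \sigma(A_2)$ and similarly for $\nu$. Invertibility of $\Delta_{A_i,B_j} - \lambda I$ again eliminates three of the four blocks, and the remaining one is dispatched by the same two tools as in Theorem~\ref{theorem1}: the approximate-eigenvector trick whenever one of the factors is a normal $A_1$ or $B_1$, and the $H_0$-quasinilpotent-part argument (via property $(\beta)$ forcing $H_0(A_2 - \mu) = \{0\}$ or $H_0(B_2 - \nu) = \{0\}$) whenever both factors are $A_2, B_2$.

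The main obstacle I expect is the sub-case $\mu \in \sigma(A_2)$, $\nu \in \sigma(B_2)$, where one must deduce $X_{22} = 0$ from $A_2 X_{22} B_2 = \mu\nu X_{22}$. In the derivation setting the key step was to rewrite this as the clean intertwining $(A_2 - \mu) X_{22} = X_{22}(B_2 - \nu)$ and transport $H_0(B_2 - \nu)$ across $X_{22}$ into $H_0(A_2 - \mu) = \{0\}$. Here the natural rewrite $(A_2 - \mu) X_{22} B_2 = -\mu\, X_{22}(B_2 - \nu)$ is no longer a single intertwining, so one must either iterate---showing inductively that $X_{22}(B_2-\nu)^{k} t$ lies in successively deeper local spectral subspaces of $A_2$ at $\mu$---or repackage the identity as a single derivation on $M_2 \oplus N_2$ by passing to $\widehat{A} = A_2 \oplus B_2^*$ and reusing the argument of Theorem~\ref{theorem1}. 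Once this step is settled the rest of the proof is routine bookkeeping.
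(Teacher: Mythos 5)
Your strategy is the paper's own: the same block decomposition, elimination of the three off-corner blocks via invertibility of $\Delta_{A_i,B_j}-\lambda I$, Fuglede--Putnam on the normal $(1,1)$ corner, the approximate-eigenvector argument when one factor is $A_1$ or $B_1$, and the quasinilpotent-part transport when both factors are $A_2,B_2$. The ``main obstacle'' you flag in that last sub-case does close, and by exactly the iteration you sketch: from $A_2X_{22}B_2=(1+\lambda)X_{22}=\mu\nu X_{22}$ one gets $(A_2-\mu)X_{22}=-\nu^{-1}A_2X_{22}(B_2-\nu)$, and since $A_2$ commutes with $A_2-\mu$, induction gives $(A_2-\mu)^nX_{22}=(-\nu^{-1})^nA_2^nX_{22}(B_2-\nu)^n$, so $X_{22}$ maps $H_0(B_2-\nu)$ into $H_0(A_2-\mu)=\{0\}$. (The paper asserts this transport from the identity $(A_2-\mu)X_{22}(B_2-\nu)+\nu(A_2-\mu)X_{22}+\mu X_{22}(B_2-\nu)=0$ without writing out the iteration, so you are being more careful than the source here.)

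The genuine omission is $\lambda=-1$. Your uniform replacement of $\lambda=\mu-\nu$ by $1+\lambda=\mu\nu$ degenerates there: $\mu\nu=0$ forces only one factor to vanish while leaving the other unconstrained (so the ``finitely many factorizations'' bookkeeping and the identification of which $\Delta_{A_i,B_j}-\lambda I$ are invertible change), the expression $(1+\overline{\lambda})^{-1}$ appearing in the corner computation is meaningless, and the iteration above requires $\nu\neq0$, i.e.\ $\lambda\neq-1$. The paper accordingly devotes two separate cases to $\lambda=-1$ (isolated in, respectively accumulation point of, $\sigma(\Delta_{A,B})$), where the equation becomes $AXB=0$ and is handled block by block using that $A_2$ and $B_2^{*}$ are injective (equivalently $B_2$ has dense range) and that $A_1,B_1$ are normal, e.g.\ $A_1X_{12}B_2=0$ forces $A_1X_{12}=0$, hence $A_1^{*}X_{12}=0$. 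You need to add this case explicitly; with it in place your proposal coincides with the paper's proof.
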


\begin{proof}
Since $A$ and $B^{*}$ are reduced by each of its eigenspaces, then
 then there exists
 $$M_{1}=\bigvee\{\ker (A-\beta I), \beta\in\sigma_{p}(A)\} \mbox{ and } M_{2}=H\ominus M_{1}$$ on the one hand and $$N_{1}=\bigvee\{\ker (B^{*}-\overline{\alpha}I), \overline{\alpha}\in\sigma_{p}(B^{*})\} \mbox{ and } N_{2}=H\ominus N_{1}$$ on the other hand such that
$A$ and $B$ have the representations $$A=\left(
                                  \begin{array}{cc}
                                     A_1 & 0 \\
                                    0 & A_{2}\\
                                  \end{array}
                                \right)
\mbox{ on } H=M_{1}\oplus M_{2}$$  and $$ B=\left(
                                  \begin{array}{cc}
                                     B_1 & 0 \\
                                    0 & B_{2}\\
                                  \end{array}
                                \right)
\mbox{ on }H=N_{1}\oplus N_{2}.$$ Recall from \cite{ER} that $\sigma(\Delta_{A,B})=\sigma(A)\sigma(B)-\{1\}$.
We consider the following cases.

\textbf{Case 1:} If $\lambda\in\mathbb{C}\backslash\sigma(\Delta_{A,B}),$ the result is immediate.

\textbf{Case 2:} If $\lambda\in iso\sigma(\Delta_{A,B})$ and $\lambda\neq-1$, then there exists finite sequences
$\{\mu_{i}\}_{i=1}^{n}$ and $\{\nu_{i}\}_{i=1}^{n}$, where
$\mu_{i}\in iso\sigma(A)$ and $\nu_{i}\in iso\sigma(B)$ such that $$\lambda=\mu_{i}\nu_{i}-1, \mbox{ for all }1\leq i\leq n.$$
$\lambda\not\in\sigma(\Delta_{A_{i},B_{j}})$ for all $1\leq i,j\leq 2$ other than $i=j=1$. Let $X\in\ker(\Delta_{A,B}-\lambda I)$ such that $$X:N_{1}\oplus N_{2}\longrightarrow M_{1}\oplus M_{2} \mbox{ have the representation } X=[X_{kl}]_{k,l=1}^{2}.$$
Hence $$(\Delta_{A,B}-\lambda I)(X)=\left(
                                  \begin{array}{cc}
                                    (\Delta_{A_{1},B_{1}}-\lambda I)(X_{11}) & (\Delta_{A_{1},B_{2}}-\lambda I)(X_{12}) \\
                                    (\Delta_{A_{2},B_{1}}-\lambda I)(X_{21}) & (\Delta_{A_{2},B_{2}}-\lambda I)(X_{22})\\
                                  \end{array}
                                \right)=0
.$$ Observe that $\Delta_{A_{i},B_{j}}-\lambda I$ is invertible for all $1\leq i,j\leq2$ other than $i=j=1$. Hence $X_{22}=X_{21}=X_{12}=0.$ Since $A_{1}$ and $B_{1}$ are normal, it follows from Fuglede-Putnam theorem and \cite[Theorem 2]{D2} that $$\frac{1}{1+\overline{\lambda}}A_{1}^{*}X_{11}B_{1}^{*}-X_{11}=0,$$ consequently $$X=X_{11}\oplus 0\in\ker(\Delta_{A^{*},B^{*}}-\overline{\lambda} I).$$

If $\lambda=-1$, we consider the case $-1 \in iso\sigma(\Delta_{A,B})$, that is $0\in iso\sigma(L_{A}R_{B})$, hence either $0\in iso\sigma(A)$ and $0\in iso\sigma(B)$ or $0\in iso\sigma(A)$ and  $0\not\in\sigma(B)$ or $0\in iso\sigma(B)$ and  $0\not\in\sigma(A)$. If $0\in iso\sigma(A)$ and $0\in iso\sigma(B)$. Let $X:N_{1}\oplus N_{2}\longrightarrow M_{1}\oplus M_{2}$, have the matrix representation  $X=[X_{kl}]_{k,l=1}^{2}$. If $X\in\ker(L_{A}R_{B}),$ then $\left(
                                  \begin{array}{cc}
                                     A_{1}X_{11}B_{1} & A_{1}X_{12}B_{2} \\
                                    A_{2}X_{21}B_{1} & A_{2}X_{22}B_{2}\\
                                  \end{array}
                                \right)=0
$, it follows that $X_{22}=X_{21}=X_{12}=0,$ and $ A_{1}X_{11}B_{1}=0$, hence $X_{11}\in\ker L_{A_{1}^{*}}R_{B_{1}^{*}}$. Thus $X\in\ker(L_{A^{*}}R_{B^{*}}).$ The proofs of the other remaining cases are similar.

\textbf{Case 3:} If $\lambda\in acc\sigma(\Delta_{A,B})=(acc\sigma(A)\sigma(B)-1)\cup (\sigma(A)acc\sigma(B) - 1) ,$ and $\lambda=-1$ then $0\in\sigma(A)acc\sigma(B)$ or $0\in acc\sigma(A)\sigma(B)$. Since $A$ and $B$ are polaroid, then $0\in acc\sigma(A)=\sigma_{D}(A)$ and $0\in acc\sigma(B)=\sigma_{D}(B)$.
\begin{itemize}
  \item If $0\in\sigma(A_1)$ and $0\in\sigma_{D}(B_1)$.
  Observe that $L_{A_{i}}R_{B_{j}}$ is invertible for all $1\leq i,j\leq2$ other than $i=j=1$, we have $X\in\ker(L_{A}R_{B})$. Hence $X_{12}=X_{21}=X_{22}=0.$ Since $A_{1}$ and $B_{1}$ are normal, it follows from Fuglede-Putnam theorem that $$A_{1}^{*}X_{11}B_{1}^{*}=0,$$ consequently $$X=X_{11}\oplus 0\in\ker(L_{A^{*}}R_{B^{*}}).$$

  \item If $0\in\sigma(A_1)$ and $0\in\sigma(B_2)$. Let $X\in\ker(L_{A}R_{B})$, then $X_{22}=X_{21}=X_{11}=0$. We have
  $A_1X_{12}B_2=0$ this implies that $A_1X_{12}=0$. Since $A_1$ is normal, then $A^{*}_1X_{12}=0$, consequently $A^{*}_1X_{12}B^{*}_2=0$. Hence $$X=\left(
                                  \begin{array}{cc}
                                    0 & X_{12} \\
                                   0 & 0\\
                                  \end{array}
                                \right)\in\ker(L_{A^{*}}R_{B^{*}}).$$
  \item If $0\in\sigma(A_2)$ and $0\in\sigma_{D}(B_1)$, this case will be proved similarly as the precedent one.
  \item If $0\in\sigma(A_2)$ and $0\in\sigma(B_2)$. Let $X\in\ker(L_{A}R_{B})$, then $X_{12}=X_{21}=X_{11}=0$. Since $A_{2}$ and $B^{*}_{2}$ are injective, then $X_{22}=0$. Hence $$X=0\in\ker(L_{A^{*}}R_{B^{*}}).$$
\end{itemize}
The other cases will be proved similarly.\\

\textbf{Case 4:}
If $\lambda\in acc\sigma(\Delta_{A,B})=(acc\sigma(A)\sigma(B)-1)\cup (\sigma(A)acc\sigma(B) - 1)$ and $\lambda\neq-1,$ then there exists $\mu\in\sigma(A)$ and $\nu\in\sigma(B)$ such that $\lambda=\mu\nu\in(\sigma(A)acc\sigma(B)-1)$ or $\lambda=\mu\nu\in(acc\sigma(A)\sigma(B)-1)$.
\begin{itemize}
  \item If $\mu\in\sigma(A_1)$ and $\nu\in\sigma_{D}(B_1)$.
  Observe that $\Delta_{A_{i},B_{j}}-\lambda I$ is invertible for all $1\leq i,j\leq2$ other than $i=j=1$, we have $X\in\ker(\Delta_{A,B}-\lambda I)$. Hence $X_{12}=X_{21}=X_{22}=0.$ Since $A_{1}$ and $B_{1}$ are normal, it follows from Fuglede-Putnam theorem and \cite[Theorem 2]{D2} that $$\frac{1}{1+\overline{\lambda}}A_{1}^{*}X_{11}B_{1}^{*}-X_{11}=0,$$ consequently $$X=X_{11}\oplus 0\in\ker(\Delta_{A^{*},B^{*}}-\overline{\lambda} I).$$

  \item If $\mu\in\sigma(A_1)$ and $\nu\in\sigma(B_2)$. Let $X\in\ker(\Delta_{A,B}-\lambda I)$, then $X_{22}=X_{21}=X_{11}=0$.
  Since $\mu\in\sigma(A_1)=cl\sigma_{p}(T_{1})$, then there exist a sequence $(\mu_{n})\in\sigma_{p}(A_{1})$ such that $\mu_{n}\longrightarrow\mu$, so for all $n\in\mathbb{N}$ there exist a non null vector $x_{n}\in\ker(A_1-\mu_{n})=\ker(A^{*}_1-\overline{\mu_{n}})$. We have
  $$B_{2}^{*}X_{12}^{*}(A_{1}^{*}-\overline{\mu})-(1+\overline{\lambda}-\overline{\mu}B^{*}_{2})X_{12}^{*}=0,$$ consequently $$B_{2}^{*}X_{12}^{*}(A_{1}^{*}-\overline{\mu})x_n-(1+\overline{\lambda}-\overline{\mu}B^{*}_{2})X_{12}^{*}x_n=0.$$ Since $\mu_{n}$ is an eigenvalue, so there exists a no null vector $z\in\ker(A^{*}_1-\overline{\mu_{n}})$, such that $\lim\|x_{n}-z\|=0.$ Since $B_{2}^{*}$ is injective, we obtain $X_{12}=0,$ hence $$X=0\in\ker(\Delta_{A^{*},B^{*}}-\overline{\lambda} I).$$
  \item If $\mu\in\sigma(A_2)$ and $\nu\in\sigma_{D}(B_1)$, this case will be proved similarly as the precedent one.
  \item If $\mu\in\sigma(A_2)$ and $\nu\in\sigma(B_2)$. Since $A$ has property $(\beta)$ it follows from
  \cite[Remarks 3.2]{BZZ} that $A_{2}$ has property $(\beta)$, applying \cite[Theorem 2.20]{A} we get $H_{0}(A_2-\mu)$ is closed and from \cite[Proposition 1.2.20]{LN} That $\sigma(A_2|_{H_{0}(A_2-\mu)})\subseteq\{\mu\}$. If $\sigma(A_2|_{H_{0}(A_2-\mu)})=\emptyset$, then $H_{0}(A_2-\mu)=\{0\}$, the case $\sigma(A_2|_{H_{0}(A_2-\mu)})=\{\mu\}$ is not possible, since the operator $A_2$ does not contains isolated points. Hence $H_{0}(A_2-\mu)=\{0\},$ we have $X\in\ker(\Delta_{A,B}-\lambda I)$, then $X_{21}=X_{12}=X_{11}=0$ and $A_2X_{22}B_2-(1+\lambda)X_{22}=0$, this implies that $$(A_2-\mu)X_{22}(B_2-\nu)+\nu(A_2-\mu)X_{22}+\mu X_{22}(B_2-\nu)=0.$$ if $t\in H_{0}(B_2-\nu)$, then $X_{22}t\in H_{0}(A_2-\mu)=\{0\}.$ Hence $X_{22}t=0.$ Since $t\in H_{0}(B_2-\nu)$, using properties of quasinilpotent part, we get $(B_2-\nu)(t)\in H_{0}(B_2-\nu),$ consequently $N_{2}=\overline{H_{0}(B_2-\nu)}.$ So $X_{22}=0$, hence $$X= 0\in\ker(\Delta_{A^{*},B^{*}}-\overline{\lambda} I).$$
\end{itemize}
The other cases will be proved similarly.
\end{proof}

\begin{theorem}
Suppose that $A,B^{*}\in L(H)$ are reduced by each of its eigenspaces, polaroid and have property $(\beta)$, then
$\mathcal{R}(d_{A,B}-\lambda I)$ is orthogonal to $\ker(d_{A,B}-\lambda I),$ for all $\lambda\in\mathbb{C}.$
\end{theorem}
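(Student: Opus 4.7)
My plan is to obtain this theorem as a direct consequence of Theorems~\ref{theorem1} and \ref{theorem2}, combined with the principle recalled from \cite{D} in the introduction, namely that the inclusion $\ker d_{A,B}\subseteq\ker d_{A^{*},B^{*}}$ already implies the Birkhoff range-kernel orthogonality of $d_{A,B}$.

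Applying Theorem~\ref{theorem1} when $d_{A,B}=\delta_{A,B}$ and Theorem~\ref{theorem2} when $d_{A,B}=\Delta_{A,B}$, the hypotheses give
\[
\ker(d_{A,B}-\lambda I)\subseteq\ker(d_{A^{*},B^{*}}-\overline{\lambda} I),\qquad \lambda\in\mathbb{C}.
\]
To invoke \cite{D} I would absorb the shift $\lambda$ into a relabeling of $A$ and $B$. In the derivation case, the identity $\delta_{A,B}-\lambda I=\delta_{A-\lambda I,\,B}$ together with its adjoint counterpart $\delta_{A^{*},B^{*}}-\overline{\lambda} I=\delta_{(A-\lambda I)^{*},\,B^{*}}$ rewrites the above inclusion as $\ker\delta_{A',B}\subseteq\ker\delta_{(A')^{*},B^{*}}$ with $A'=A-\lambda I$, so \cite{D} applies verbatim. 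In the elementary-operator case with $\lambda\neq-1$, choosing any square root $\rho$ of $1+\lambda$ and setting $A'=A/\rho$, $B'=B/\rho$ gives $\Delta_{A,B}-\lambda I=(1+\lambda)\Delta_{A',B'}$ together with the analogous adjoint identity, so the inclusion becomes $\ker\Delta_{A',B'}\subseteq\ker\Delta_{(A')^{*},(B')^{*}}$ and \cite{D} applies again. The structural hypotheses (reduced by eigenspaces, polaroid, property $(\beta)$) are invariant under translation and scalar multiplication, so $A'$ and $B'$ remain eligible, although this plays no role in the application of \cite{D}, which only uses the kernel inclusion.

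The residual case is $d_{A,B}=\Delta_{A,B}$ with $\lambda=-1$, where $\Delta_{A,B}-\lambda I=L_{A}R_{B}$ and the inclusion above specializes to $\ker L_{A}R_{B}\subseteq\ker L_{A^{*}}R_{B^{*}}$. Here I would adapt the proof of \cite{D} verbatim, since its argument relies only on the adjoint-intertwining supplied by such an inclusion (together with a standard operator-norm duality inequality) rather than on the specific additive form of $d$.

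The main potential obstacle is precisely this last adaptation: confirming that the technique of \cite{D} transports cleanly from generalized derivations and elementary operators to the two-sided multiplication $L_{A}R_{B}$. This is a routine cosmetic modification, but it is the only step not already packaged in an earlier result; apart from it, the present theorem is a mechanical consequence of Theorems~\ref{theorem1} and \ref{theorem2} together with the algebraic reductions above.
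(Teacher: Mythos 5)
Your proposal is correct and follows essentially the same route as the paper, whose entire proof is the one-line citation ``Follows from \cite[Lemma 4]{D}'', i.e.\ the kernel inclusion from Theorems~\ref{theorem1} and \ref{theorem2} fed into Duggal's result that $\ker d_{A,B}\subseteq\ker d_{A^{*},B^{*}}$ forces range-kernel orthogonality. The extra details you supply --- absorbing the shift $\lambda$ into $A$ (resp.\ rescaling by a square root of $1+\lambda$) and treating the residual case $\lambda=-1$ where $\Delta_{A,B}+I=L_{A}R_{B}$ --- are exactly the steps the paper leaves implicit, and your handling of them is sound.
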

\begin{proof} Follows from \cite[Lemma 4]{D}
\end {proof}
\begin{corollary}\cite[Lemma 2.1]{Du8}
Suppose that $A,B^{*}\in L(H)$ are p-hyponormal or log-hyponormal, then
\begin{center}
$\ker(d_{A,B}-\lambda I)\subseteq\ker(d_{A^{*},B^{*}}-\overline{\lambda} I),$ $\forall\lambda\in\mathbb{C}$\end{center}

\end{corollary}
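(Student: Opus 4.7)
The plan is to deduce this corollary as an immediate consequence of Theorems \ref{theorem1} and \ref{theorem2}, once we verify that p-hyponormal and log-hyponormal operators satisfy the three structural hypotheses required there: namely that $A$ and $B^{*}$ (i) are reduced by each of their eigenspaces, (ii) are polaroid, and (iii) possess Bishop's property $(\beta)$. Since the corollary concerns both forms $\delta_{A,B}$ and $\Delta_{A,B}$ of $d_{A,B}$, both theorems will be invoked in parallel; once the hypotheses are checked there is no further work.

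First I would recall the reducing-eigenspace property. For a p-hyponormal $T$ it is classical (via the Putnam inequality or the Berberian technique) that $Tx=\mu x$ forces $T^{*}x=\overline{\mu}x$, whence every eigenspace $\ker(T-\mu I)$ reduces $T$. The same conclusion holds for log-hyponormal operators through the Aluthge transform, which maps the log-hyponormal class into the $(1/2)$-hyponormal class while preserving eigenvectors. Applied to $A$ and (by taking adjoints) to $B^{*}$, this supplies condition (i).

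Next I would invoke the well-known spectral regularity of these two classes: any $\mu\in \mathrm{iso}\,\sigma(T)$ for a p-hyponormal or log-hyponormal $T$ is a simple pole of the resolvent with self-adjoint Riesz idempotent, so that $\mathrm{iso}\,\sigma(T)\subseteq \Pi(T)$; hence $T$ is polaroid. Simultaneously, both classes are known to enjoy Bishop's property $(\beta)$: for p-hyponormal operators this is Putinar's theorem, and for log-hyponormal operators it follows by combining the Aluthge transform with the fact that property $(\beta)$ is inherited through that transform. This yields (ii) and (iii) for $A$ and for $B^{*}$.

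With all three hypotheses in place, Theorem \ref{theorem1} gives $\ker(\delta_{A,B}-\lambda I)\subseteq \ker(\delta_{A^{*},B^{*}}-\overline{\lambda} I)$ and Theorem \ref{theorem2} gives the analogous inclusion for $\Delta_{A,B}$, for every $\lambda\in\mathbb{C}$; together these establish the corollary. The main (and in fact only) obstacle is the verification of property $(\beta)$ for the log-hyponormal case, but this is a standard consequence of the Aluthge transform reduction and is documented in the references cited in the introduction, so the proof reduces to a citation chain followed by an application of Theorems \ref{theorem1} and \ref{theorem2}.
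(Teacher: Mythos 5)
Your proposal is correct and follows exactly the route the paper intends: the corollary is stated without an explicit proof, but the authors' stated plan is that all such results are consequences of Theorems \ref{theorem1} and \ref{theorem2}, which is precisely what you do by verifying that p-hyponormal and log-hyponormal operators are reduced by their eigenspaces, polaroid, and satisfy property $(\beta)$. Your verification of these three hypotheses (via the classical eigenvector results, the pole structure of isolated spectral points, and Putinar-type subscalarity) is the standard and correct way to fill in the details the paper leaves implicit.
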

\begin{corollary}\cite[Lemma 2.4]{CH}
Let $A, B^{*}\in L(H)$ be w-hyponormal operators such that $\ker
A\subseteq\ker A^{*}$ and $\ker B^{*}\subseteq\ker B$, then
\begin{equation*}
\ker d_{A,B}\subseteq\ker d_{A^{*},B^{*}}.
\end{equation*}
\end{corollary}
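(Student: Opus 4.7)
The plan is to reduce the Birkhoff range--kernel orthogonality to the asymmetric Putnam--Fuglede kernel inclusion already established in this section. As noted in the introduction (citing \cite{D}), a sufficient condition for $\mathcal{R}(d_{A,B}-\lambda I)$ to be Birkhoff-orthogonal to $\ker(d_{A,B}-\lambda I)$ is the inclusion $\ker(d_{A,B}-\lambda I)\subseteq\ker(d_{A^{*},B^{*}}-\overline{\lambda}\, I)$. So the proof splits cleanly into two steps: first verify the kernel inclusion under the present hypotheses, then invoke the general orthogonality lemma.

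For the first step, observe that the hypotheses on $A$ and $B^{*}$ (reduced by each of their eigenspaces, polaroid, property $(\beta)$) match exactly those of Theorems \ref{theorem1} and \ref{theorem2}. Theorem \ref{theorem1} treats the case $d_{A,B}=\delta_{A,B}$, while Theorem \ref{theorem2} treats $d_{A,B}=\Delta_{A,B}$. Thus in either case we obtain
\[
\ker(d_{A,B}-\lambda I)\subseteq \ker(d_{A^{*},B^{*}}-\overline{\lambda}\, I), \qquad \forall\lambda\in\mathbb{C}.
\]

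For the second step, one wishes to show $\|X\|\leq \|X-(d_{A,B}-\lambda I)Y\|$ for every $X\in\ker(d_{A,B}-\lambda I)$ and every $Y\in L(H)$. The standard route, which is the content of \cite[Lemma 4]{D}, is to select a sequence of unit vectors $(x_n)$ with $\|Xx_n\|\to\|X\|$ and test the target inequality against the unit vectors $Xx_n/\|Xx_n\|$. The double-kernel condition lets one move the $A$- and $B$-factors of $(d_{A,B}-\lambda I)Y$ across the inner product via the companion relation $(d_{A^{*},B^{*}}-\overline{\lambda} I)X=0$, producing a cancellation that forces the cross term $\langle (d_{A,B}-\lambda I)Yx_n,\, Xx_n\rangle$ to vanish along the chosen sequence. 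Passing to the limit yields $\|X-(d_{A,B}-\lambda I)Y\|\geq \lim_n\|Xx_n\| = \|X\|$, which is exactly the desired Birkhoff orthogonality.

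The principal technical hurdle is not the kernel inclusion (which Theorems \ref{theorem1} and \ref{theorem2} already supply) but the passage from that algebraic inclusion to an analytic inequality in the operator norm: since the operator norm is not induced by an inner product, one cannot appeal to a Hilbert-space orthogonal decomposition as in the Hilbert--Schmidt setting, and the approximating-sequence device described above is essential. That device is precisely what \cite[Lemma 4]{D} provides, so once its hypothesis has been verified via the previous two theorems, the conclusion follows at once.
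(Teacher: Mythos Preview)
Your proposal addresses the wrong statement. The corollary asks you to prove the kernel inclusion $\ker d_{A,B}\subseteq\ker d_{A^{*},B^{*}}$ under the hypotheses that $A$ and $B^{*}$ are \emph{w-hyponormal} with $\ker A\subseteq\ker A^{*}$ and $\ker B^{*}\subseteq\ker B$. You instead argue toward Birkhoff orthogonality of $\mathcal{R}(d_{A,B}-\lambda I)$ and $\ker(d_{A,B}-\lambda I)$, which is the content of the preceding theorem, not of this corollary. The entire second step of your write-up (the approximating-sequence argument via \cite[Lemma 4]{D}) is therefore irrelevant here.

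More seriously, even your first step misidentifies the hypotheses: you write that ``the hypotheses on $A$ and $B^{*}$ (reduced by each of their eigenspaces, polaroid, property $(\beta)$) match exactly those of Theorems \ref{theorem1} and \ref{theorem2}.'' But those are not the hypotheses of the corollary at hand; they are the hypotheses of the main theorems. The whole point of this corollary is to check that w-hyponormal operators satisfying the stated kernel conditions actually \emph{meet} those three abstract hypotheses, so that Theorems \ref{theorem1} and \ref{theorem2} can be invoked at $\lambda=0$. That verification --- that w-hyponormal operators are polaroid, have property $(\beta)$, and (together with the assumption $\ker A\subseteq\ker A^{*}$, which handles the eigenvalue $0$) are reduced by each of their eigenspaces --- is precisely the missing content. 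Once it is supplied, the kernel inclusion follows immediately from the main theorems, and no orthogonality argument is needed.
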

\begin{corollary}\cite[Theorem 3.6]{BL},\cite[Lemma 2.4]{DKK}
Let $A, B^{*}\in L(H)$ be w-hyponormal operators such that $\ker
A\subseteq\ker A^{*}$ and $\ker B^{*}\subseteq \ker B$, then
\begin{equation*}
\ker \delta_{A,B}\subseteq \ker \delta_{A^{*},B^{*}}.
\end{equation*}
\end{corollary}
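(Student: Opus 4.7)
The plan is to reduce this theorem to the kernel-inclusion statements already established in Theorems \ref{theorem1} and \ref{theorem2}, and then feed them into the general range-kernel mechanism of \cite[Lemma 4]{D}. Under the stated hypotheses on $A$ and $B^{*}$, Theorem \ref{theorem1} supplies $\ker(\delta_{A,B}-\lambda I)\subseteq\ker(\delta_{A^{*},B^{*}}-\overline{\lambda}I)$ for every $\lambda\in\mathbb{C}$, while Theorem \ref{theorem2} supplies the analogous inclusion for $\Delta_{A,B}$. So both incarnations of $d_{A,B}-\lambda I$ automatically satisfy an asymmetric Putnam-Fuglede commutativity property, which is exactly the hypothesis required by Duggal's lemma.

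Next, I would invoke \cite[Lemma 4]{D} as a black box to upgrade the kernel inclusion to Birkhoff orthogonality. The underlying reasoning for that lemma is by now classical: fixing $X\in\ker(d_{A,B}-\lambda I)$ and $Y\in L(H)$, one exploits the extra identity $X\in\ker(d_{A^{*},B^{*}}-\overline{\lambda}I)$ coming from the asymmetric inclusion to build a one-parameter family of isometric conjugations that fix $X$; averaging $Y$ over this family and applying an Anderson-type maximum-modulus argument yields the desired inequality $\|X\|\leq\|X-(d_{A,B}-\lambda I)Y\|$. Because the lemma is formulated uniformly in terms of the asymmetric commutativity property, the same argument applies to both $\delta_{A,B}$ and $\Delta_{A,B}$.

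The only mildly delicate point is the handling of the scalar $\lambda$: for the derivation $\delta_{A,B}$ one can absorb the shift by replacing $A$ with $A-\lambda I$, but for the elementary operator $\Delta_{A,B}$ there is no such direct translation. This is not a genuine obstacle, however, since \cite[Lemma 4]{D} is stated in terms of the operator $d_{A,B}-\lambda I$ itself and only depends on the kernel inclusion for that particular operator, not on any translation invariance. Thus the theorem follows at once by combining Theorems \ref{theorem1}, \ref{theorem2}, and \cite[Lemma 4]{D}, and the substantive content of the result has already been absorbed into the two preceding theorems.
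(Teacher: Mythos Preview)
You have misidentified what the corollary asserts. The statement is purely a kernel inclusion, namely $\ker\delta_{A,B}\subseteq\ker\delta_{A^{*},B^{*}}$; there is no orthogonality conclusion here. Everything you write about feeding the kernel inclusion into \cite[Lemma 4]{D}, the Anderson-type averaging argument, and the handling of $\lambda$ in $\Delta_{A,B}$ is addressed to a different result (essentially Theorem~2.4 in the paper), not to this corollary. In particular, Theorem~\ref{theorem2} and \cite[Lemma 4]{D} play no role whatsoever: the corollary concerns only $\delta_{A,B}$ and only the case $\lambda=0$.

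What the corollary actually requires is the step you skip over with the phrase ``Under the stated hypotheses on $A$ and $B^{*}$, Theorem~\ref{theorem1} supplies\ldots''. The entire content of the corollary is to check that $w$-hyponormal operators satisfying $\ker A\subseteq\ker A^{*}$ (respectively $\ker B^{*}\subseteq\ker B$) meet the three hypotheses of Theorem~\ref{theorem1}: they are polaroid, they have Bishop's property~$(\beta)$, and they are reduced by each of their eigenspaces. The first two are known properties of $w$-hyponormal operators (indeed of the larger class $\mathcal{A}(s,t)$); for the third, $w$-hyponormal operators are automatically reduced by eigenspaces corresponding to nonzero eigenvalues, and the kernel hypotheses $\ker A\subseteq\ker A^{*}$, $\ker B^{*}\subseteq\ker B$ are precisely what is needed to cover the eigenvalue $0$. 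Once these verifications are made, Theorem~\ref{theorem1} with $\lambda=0$ is exactly the desired inclusion. Your write-up omits this verification and instead proves an orthogonality statement that was not asked for.
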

\begin{corollary}\cite[Theorem 2.5]{DKK}
Let $A, B^{*}\in L(H)$. If $A\in\mathcal{A}(s_1,t_1)$  and $B^{*}\in\mathcal{A}(s_2,t_2)$, $0<s_1,s_2,t_1,t_2\leq 1$ are such that
$\ker A\subseteq\ker A^{*}$ and $\ker B^{*}\subseteq\ker B$, then
\begin{equation*}
\ker \delta_{A,B}\subseteq \ker \delta_{A^{*},B^{*}}.
\end{equation*}

\end{corollary}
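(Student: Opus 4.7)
The plan is to deduce this corollary directly from Theorem~\ref{theorem1} by specialising to $\lambda=0$. The task therefore becomes purely structural: for any $T\in\mathcal{A}(s,t)$ with $0<s,t\leq 1$ satisfying $\ker T\subseteq\ker T^{*}$, I must verify the three hypotheses of Theorem~\ref{theorem1}, namely that $T$ is reduced by each of its eigenspaces, that $T$ is polaroid, and that $T$ has Bishop's property $(\beta)$. Once these three facts are in hand for both $A$ and $B^{*}$, the case $\lambda=0$ of Theorem~\ref{theorem1} delivers $\ker\delta_{A,B}\subseteq\ker\delta_{A^{*},B^{*}}$ immediately, which is exactly the conclusion of the corollary.

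For property $(\beta)$, I would appeal to the known subscalarity of class $\mathcal{A}(s,t)$ operators with $0<s,t\leq 1$, which in particular guarantees $(\beta)$; this is a standard result on these classes and requires no further work here. For the reducibility by eigenspaces, I would use the well-established fact that at every nonzero eigenvalue $\mu\in\sigma_{p}(T)$ a class $\mathcal{A}(s,t)$ operator satisfies $\ker(T-\mu I)=\ker(T-\mu I)^{*}$, while at $\mu=0$ the hypothesis $\ker T\subseteq\ker T^{*}$ (which for these classes upgrades to equality, since they are normaloid on $\overline{\mathcal{R}(T)}$) provides the same identity. From the identity $\ker(T-\mu I)=\ker(T-\mu I)^{*}$ at every point of $\sigma_{p}(T)$, each individual eigenspace reduces $T$, and hence so does the closed linear span $M=\bigvee\{\ker(T-\mu I):\mu\in\sigma_{p}(T)\}$.

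The main obstacle will be the polaroid property. Here I would combine the two previous ingredients as follows. At an isolated $\mu\in\sigma(T)$, property $(\beta)$ together with \cite[Theorem 2.20]{A} gives that $H_{0}(T-\mu I)$ is closed, and \cite[Proposition 1.2.20]{LN} yields $\sigma(T|_{H_{0}(T-\mu I)})\subseteq\{\mu\}$, so the restriction $T|_{H_{0}(T-\mu I)}-\mu I$ is quasinilpotent. The reducing-eigenspace structure from the previous step, applied to this restriction, forces that restriction to be normal as well, and a normal quasinilpotent operator is zero; equivalently $H_{0}(T-\mu I)=\ker(T-\mu I)$, so $p(T-\mu I)=1$. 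Since $T^{*}$ has property $(\delta)$ and therefore SVEP as well, the dual argument (or directly the closedness of the Riesz idempotent range) yields $q(T-\mu I)=1$, so $\mu\in\Pi(T)$ and $T$ is polaroid.

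Applying the three verified properties to both $A$ and $B^{*}$, the hypotheses of Theorem~\ref{theorem1} are met and the case $\lambda=0$ produces the desired inclusion $\ker\delta_{A,B}\subseteq\ker\delta_{A^{*},B^{*}}$, completing the proof of the corollary.
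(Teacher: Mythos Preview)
Your overall strategy is exactly the one the paper intends: the corollary is stated without proof because it is meant to follow from Theorem~\ref{theorem1} at $\lambda=0$ once the three hypotheses (reduction by eigenspaces, polaroid, property $(\beta)$) are checked for class $\mathcal{A}(s,t)$ operators with the kernel condition. Indeed, the paper immediately afterwards proves the more general $k$-quasi-class $\mathcal{A}$ version by simply \emph{citing} the three properties from the literature (\cite[Theorem 2.4]{GF1} for polaroid, \cite[Lemma 11]{TJKU} for $(\beta)$, \cite[Lemma 13]{TJKU} for reduction by eigenspaces) and invoking Theorem~\ref{theorem1}. Your proof fits this template and is correct in outline.

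There is, however, a genuine gap in your hand-made polaroid argument. From property $(\beta)$ you correctly obtain that $H_{0}(T-\mu I)$ is closed and that $T|_{H_{0}(T-\mu I)}-\mu I$ is quasinilpotent. But the step ``the reducing-eigenspace structure from the previous step, applied to this restriction, forces that restriction to be normal'' is not justified: $H_{0}(T-\mu I)$ is a priori only $T$-invariant, not reducing, so you cannot transport the class $\mathcal{A}(s,t)$ structure (or the eigenspace-reduction property) to the restriction without further work. What is actually needed here is the known fact that for class $\mathcal{A}(s,t)$ operators the Riesz idempotent at an isolated spectral point is self-adjoint (equivalently, these operators are isoloid and polaroid); this is precisely the content of the results the paper cites. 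Replacing your ad hoc polaroid argument with a citation, as the paper does, closes the gap and keeps your proof aligned with the paper's approach.
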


As a nice application of our main results the Fuglede Putnam theorem for k-quasi-class $\mathcal{A}$ operators which contains all the precedent classes of operators.

\begin{theorem}
Let $A, B^{*}\in LH)$ be k-quasi-class A operators, then
$$\ker(d_{A,B}-\lambda I)\subseteq\ker(d_{A^{*},B^{*}}-\overline{\lambda} I),$$ for all non null complex number $\lambda$.
\end{theorem}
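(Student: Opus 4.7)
The plan is to deduce the statement from Theorems \ref{theorem1} and \ref{theorem2}. Accordingly, it suffices to verify that every k-quasi-class $\mathcal{A}$ operator $T$ satisfies the three structural hypotheses used there: $T$ has Bishop's property $(\beta)$, is polaroid, and is reduced by each of its non-zero eigenspaces. Once these are established, applying both theorems to the pair $(A,B)$ yields the inclusion for either choice of $d_{A,B}$.

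The structural input for all three verifications is the block-triangular decomposition of a k-quasi-class $\mathcal{A}$ operator (see \cite{TJKU, GF}): with respect to $H=\overline{\mathcal{R}(T^{k})}\oplus\ker(T^{*k})$, $T$ admits the form
\begin{equation*}
T=\left(\begin{array}{cc} T_{1} & T_{2} \\ 0 & T_{3} \end{array}\right),
\end{equation*}
where $T_{1}$ is class $\mathcal{A}$ on $\overline{\mathcal{R}(T^{k})}$, $T_{3}^{k}=0$, and $\sigma(T)=\sigma(T_{1})\cup\{0\}$. I would first invoke the fact that class $\mathcal{A}$ operators, being paranormal, are polaroid, possess property $(\beta)$, and are reduced by each of their non-zero eigenspaces. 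The transfer of property $(\beta)$ and of the polaroid property from $T_{1}$ to $T$ through the triangular decomposition is routine, since the lower-right corner $T_{3}$ is nilpotent and therefore trivially satisfies both properties. For the reduction by non-zero eigenspaces of $T$: any eigenvector associated with $\mu\neq 0$ must lie entirely in the invariant subspace $\overline{\mathcal{R}(T^{k})}$ (as $\mu$ is not an eigenvalue of the nilpotent $T_{3}$, the triangular form forces its second coordinate to vanish), so the claim reduces to the corresponding property for class $\mathcal{A}$ operators.

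With $A$ and $B^{*}$ verified in this way, Theorems \ref{theorem1} and \ref{theorem2} apply verbatim, giving $\ker(d_{A,B}-\lambda I)\subseteq\ker(d_{A^{*},B^{*}}-\overline{\lambda}I)$ for every $\lambda\neq 0$. The main obstacle, and the reason for excluding $\lambda=0$, is the behaviour at the zero eigenvalue: a k-quasi-class $\mathcal{A}$ operator is in general \emph{not} reduced by $\ker T$, because the off-diagonal block $T_{2}$ couples $\overline{\mathcal{R}(T^{k})}$ to $\ker(T^{*k})$ and obstructs the orthogonal direct-sum decomposition at $\mu=0$ that underpins the case analyses in the proofs of Theorems \ref{theorem1} and \ref{theorem2}.
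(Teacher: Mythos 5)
Your top-level strategy is exactly the paper's: check that k-quasi-class $\mathcal{A}$ operators are polaroid, have property $(\beta)$, and are reduced by their eigenspaces, then invoke Theorems \ref{theorem1} and \ref{theorem2}. The difference is that the paper simply cites these three facts (\cite[Theorem 2.4]{GF1} for polaroid, \cite[Lemma 11]{TJKU} for property $(\beta)$, \cite[Lemma 13]{TJKU} for the reduction property), whereas you try to re-derive them from the triangulation $T=\left(\begin{smallmatrix} T_{1} & T_{2}\\ 0 & T_{3}\end{smallmatrix}\right)$ on $H=\overline{\mathcal{R}(T^{k})}\oplus\ker (T^{*k})$, and two of your derivations have real gaps.

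First, you justify property $(\beta)$ for the class $\mathcal{A}$ corner $T_{1}$ by saying it is paranormal; but paranormality is not known to imply Bishop's property $(\beta)$ (it does not even obviously give more than SVEP), so this step needs to be replaced by the specific result for class $\mathcal{A}$ (or directly \cite[Lemma 11]{TJKU} for the k-quasi-class $\mathcal{A}$ operator itself, which also spares you the transfer argument through the triangle). Second, and more seriously, your verification that a nonzero eigenspace reduces $T$ is incomplete. From $Tx=\mu x$ with $\mu\neq 0$ you correctly get $x=x_{1}\oplus 0$ with $x_{1}\in\overline{\mathcal{R}(T^{k})}$ and, from the class $\mathcal{A}$ property of $T_{1}$, $T_{1}^{*}x_{1}=\overline{\mu}x_{1}$. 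But then
$T^{*}x=T_{1}^{*}x_{1}\oplus T_{2}^{*}x_{1}=\overline{\mu}x_{1}\oplus T_{2}^{*}x_{1}$,
so the claim $\ker(T-\mu I)\subseteq\ker(T^{*}-\overline{\mu}I)$ still requires $T_{2}^{*}x_{1}=0$; this is precisely the point where the k-quasi-class $\mathcal{A}$ inequality has to be used (typically by decomposing $H=[x]\oplus[x]^{\perp}$ and killing the off-diagonal entry), and it is the actual content of \cite[Lemma 13]{TJKU}. Saying the claim ``reduces to the corresponding property for class $\mathcal{A}$ operators'' skips this. Once these two points are repaired (most economically, by citing the three lemmas as the paper does), the application of Theorems \ref{theorem1} and \ref{theorem2} goes through as you describe.
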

\begin{proof}
We know from \cite[Theorem 2.4]{GF1} that k-quasi-class $\mathcal{A}$ operators are polaroid and from \cite[Lemma 11]{TJKU} that k-quasi-class $\mathcal{A}$ operators have property $(\beta)$. Since by \cite[Lemma 13]{TJKU}, $A$ and $B^{*}$ are reduced by each of its eigenspaces, then the conclusion follows from Theorem \ref{theorem1} and Theorem \ref{theorem2}.
\end{proof}
\begin{theorem}
Let $A, B^{*}\in L(H)$ be k-quasi-class $\mathcal{A}$ operators such that $\ker
A\subseteq\ker A^{*}$ and $\ker B^{*}\subseteq\ker B$, then
\begin{equation*}
\ker d_{A,B}\subseteq\ker d_{A^{*},B^{*}}.
\end{equation*}
\end{theorem}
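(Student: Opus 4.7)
The preceding theorem already establishes $\ker(d_{A,B}-\lambda I)\subseteq\ker(d_{A^{*},B^{*}}-\overline{\lambda}I)$ for every nonzero $\lambda\in\mathbb{C}$, so my plan is to reduce the entire statement to the single remaining case $\lambda=0$. Concretely, for $\delta_{A,B}$ I must show that $AX=XB$ implies $A^{*}X=XB^{*}$, and for $\Delta_{A,B}$ that $AXB=X$ implies $A^{*}XB^{*}=X$. The kernel hypotheses $\ker A\subseteq\ker A^{*}$ and $\ker B^{*}\subseteq\ker B$ are used precisely here; they play no role in the $\lambda\neq 0$ case because invertibility of $A$ (resp.\ $B$) already rules out the obstruction.

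The strategy for $\lambda=0$ is to extract the ``injective part'' of $A$ and of $B$ and reduce to the situation already handled. Since $A$ is a k-quasi-class $\mathcal{A}$ operator, by the decomposition theorem in \cite{TJKU} (cited in the excerpt) it admits a triangular form on $H=\overline{\mathcal{R}(A^{k})}\oplus\ker A^{*k}$ with a class $\mathcal{A}$ entry on the first summand and a nilpotent entry of order $k$ on the second. The hypothesis $\ker A\subseteq\ker A^{*}$ implies in particular $\ker A\subseteq\ker A^{*k}$, and together with the class $\mathcal{A}$ character of the first block this forces the off-diagonal entry of the triangular form to vanish. Thus $A$ splits as an orthogonal direct sum $A=A_{1}\oplus 0$, where $A_{1}$ is an injective k-quasi-class $\mathcal{A}$ operator on $\overline{\mathcal{R}(A)}$ (hence in fact class $\mathcal{A}$, therefore polaroid with property $(\beta)$ and reduced by its eigenspaces). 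Dualizing the same argument for $B^{*}$ yields $B=B_{1}\oplus 0$ on $\overline{\mathcal{R}(B^{*})}\oplus\ker B^{*}$, with $B_{1}^{*}$ injective of the same type.

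With these decompositions in hand, I write $X$ as a block matrix $X=[X_{ij}]_{i,j=1}^{2}$ relative to the splittings of $H$ on each side and expand $d_{A,B}(X)=0$ block-wise. The kernel summands kill three of the four blocks outright: the equations for $X_{12}$, $X_{21}$, $X_{22}$ collapse to relations of the form $A_{1}X_{12}=0$, $X_{21}B_{1}=0$ (respectively $A_{1}X_{12}B_{1}=X_{12}\cdot 0=0$ etc.\ for $\Delta_{A,B}$) which, combined with the injectivity of $A_{1}$ and of $B_{1}^{*}$, force those blocks to vanish. The surviving equation $A_{1}X_{11}=X_{11}B_{1}$ (resp.\ $A_{1}X_{11}B_{1}=X_{11}$) involves only the class $\mathcal{A}$ injective pieces, so the preceding theorem, applied now to $A_{1}$ and $B_{1}^{*}$, gives $A_{1}^{*}X_{11}=X_{11}B_{1}^{*}$ (resp.\ the analogue). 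Reassembling the blocks and using $A^{*}=A_{1}^{*}\oplus 0$, $B^{*}=B_{1}^{*}\oplus 0$ yields $d_{A^{*},B^{*}}(X)=0$.

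The main obstacle I anticipate is the first half of the second paragraph: verifying that the kernel inclusions really do upgrade the natural triangular decomposition of a k-quasi-class $\mathcal{A}$ operator to a genuine orthogonal direct sum. This is where the hypotheses $\ker A\subseteq\ker A^{*}$ and $\ker B^{*}\subseteq\ker B$ are indispensable, and although the conclusion is clean the bookkeeping on the off-diagonal block (showing that it maps into $\ker A^{*}$ and is therefore zero after using the inclusion) requires care. Once this structural point is secured, the remainder of the proof is a routine block-matrix chase combined with an appeal to the earlier theorem.
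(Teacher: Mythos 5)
Your overall strategy is workable, but the decisive step is circular as written. After reducing to the surviving block equation $A_{1}X_{11}=X_{11}B_{1}$ (resp.\ $A_{1}X_{11}B_{1}=X_{11}$) you invoke ``the preceding theorem'' for $A_{1}$ and $B_{1}^{*}$ --- but that equation is exactly the case $\lambda=0$ of $d_{A_{1},B_{1}}$, which is the one case the preceding theorem explicitly excludes. The correct appeal is to Theorems \ref{theorem1} and \ref{theorem2} themselves: $A_{1}$ and $B_{1}^{*}$ are injective k-quasi-class $\mathcal{A}$ operators, hence have no zero eigenvalue, hence are reduced by each of their eigenspaces by \cite[Lemma 13]{TJKU}, and are polaroid with property $(\beta)$, so the hypotheses of the main theorems are met at $\lambda=0$. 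Two further points need repair. First, the detour through the triangular form on $\overline{\mathcal{R}(A^{k})}\oplus\ker A^{*k}$ is both unnecessary and unjustified: it is not clear that $\ker A\subseteq\ker A^{*}$ annihilates the off-diagonal entry of \emph{that} decomposition, nor that its second summand coincides with $\ker A$. What is true, and is all you need, is that $\ker A\subseteq\ker A^{*}$ makes $\ker A$ a reducing subspace of $A$ (it is $A$-invariant trivially, and $A^{*}$ annihilates it), so $A=A_{1}\oplus 0$ on $(\ker A)^{\perp}\oplus\ker A$ with $A_{1}$ injective; dually for $B^{*}$. Second, $A_{1}$ need not be class $\mathcal{A}$: injectivity does not give $A_{1}^{k}$ dense range, so you only know $A_{1}$ is k-quasi-class $\mathcal{A}$ --- which, as just noted, suffices.

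For comparison, the paper dispenses with the block-matrix chase entirely: it observes that the kernel hypotheses make $0$ a normal eigenvalue of $A$ and of $B^{*}$, so that together with \cite[Lemma 13]{TJKU} both operators are reduced by \emph{every} eigenspace, and then Theorems \ref{theorem1} and \ref{theorem2} apply verbatim at $\lambda=0$. Your explicit splitting off of the kernels is a legitimate alternative once the step above is fixed, but it re-proves by hand what the case analysis inside those theorems already absorbs.
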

\begin{proof}
 The conditions $\ker A\subseteq\ker A^{*}$ and $\ker B^{*}\subseteq\ker B$ implies that $0$ is normal eigenvalue of both $A$ and $B^{*}.$
it follows from Theorem \ref{theorem1} and Theorem \ref{theorem2} that $\ker d_{A,B}\subseteq\ker d_{A^{*},B^{*}}.$
\end{proof}

\end{document}